\newtheorem{theorem}{Theorem}[section]
\newtheorem{proposition}[theorem]{Proposition}
\newtheorem{lemma}[theorem]{Lemma}
\newtheorem{claim}[theorem]{Claim}
\newtheorem*{claim*}{Claim}
\newtheorem{corollary}[theorem]{Corollary}
\newtheorem{Main Conjecture}[theorem]{Main Conjecture}
\theoremstyle{remark}
\newtheorem{definition}[theorem]{Definition}
\newtheorem{example}[theorem]{Example}
\theoremstyle{plain}
\newcommand{\cellsize}{19}
\newlength{\cellsz} \setlength{\cellsz}{\cellsize\unitlength}
\newsavebox{\cell}
\sbox{\cell}{\begin{picture}(\cellsize,\cellsize)
\put(0,0){\line(1,0){\cellsize}}
\put(0,0){\line(0,1){\cellsize}}
\put(\cellsize,0){\line(0,1){\cellsize}}
\put(0,\cellsize){\line(1,0){\cellsize}}
\end{picture}}
\newcommand\cellify[1]{\def\thearg{#1}\def\nothing{}%
\ifx\thearg\nothing
\vrule width0pt height\cellsz depth0pt\else
\hbox to 0pt{\usebox{\cell} \hss}\fi%
\vbox to \cellsz{
\vss
\hbox to \cellsz{\hss$#1$\hss}
\vss}}
\newcommand\tableau[1]{\vtop{\let\\\cr
\baselineskip -16000pt \lineskiplimit 16000pt \lineskip 0pt
\ialign{&\cellify{##}\cr#1\crcr}}}
\newcommand{\excise}[1]{}
\begin{document}
\pagestyle{plain}

\title{Minimal equations for matrix Schubert varieties}
\author{Shiliang Gao}
\author{Alexander Yong}
\address{Dept.~of Mathematics, University of Illinois at Urbana-Champaign, Urbana, IL 61801}

\email{sgao23@illinois.edu, ayong@illinois.edu}

\begin{abstract}
Explicit minimal generators for Fulton's Schubert determinantal ideals are determined along
with some implications.
\end{abstract}

\keywords{Matrix Schubert varieties, Minimal generators, Gr\"obner basis, Complete intersection.}

\subjclass{05E40, 14M12, 14M15}

\date{November 19, 2023}
\maketitle

\section{Introduction and Results}

Let ${\sf Mat}_{n\times n}$ be the space of $n\times n$ matrices over a field $\Bbbk$; the coordinate ring is 
$R={\Bbbk}[x_{ij}]_{1\leq i,j\leq n}$. Let $GL_n$ be the general linear group of invertible $n\times n$ matrices with a 
Borel subgroup $B$ of upper triangular matrices and $B_-$ of lower triangular matrices. Let $B_-\times B$ act on 
${\sf Mat}_{n\times n}$ by $(b_-,b)\cdot M= b_{-}Mb^{-1}$. Let $w$ be a permutation in the symmetric group ${\mathfrak S}_n$ on 
$[n]=\{1,2,\ldots,n\}$, and suppose $M_w$ is its permutation matrix with $1$ in position $(i,w(i))$ and $0$'s elsewhere. 

\begin{definition}[\cite{Fulton,KM:annals}]
The \emph{matrix Schubert variety} $X_w$ is the $B_{-}\times B$ orbit closure of $M_w$.
\end{definition}

\begin{definition}[\cite{Fulton,KM:annals}]
The \emph{Schubert determinantal ideal} $I_w\subset R$ is the defining ideal of $X_w$.
\end{definition}

Since \cite{Fulton}, there has been interest in matrix Schubert varieties and the Schubert determinantal ideals; see, \emph{e.g.,} \cite{KM:annals, KMY, Hsiao, Fink, Hamaker, Klein, Klein.Weigandt, Rajchgot, Pechenik} and references therein. 

Let $r_{i,j}=r_{i,j}(w)$ be the \emph{rank function} of $w$. It counts the number of $1$'s weakly northwest of position $(i,j)$ in $M_w$. Let $M^{[i,j]}$ denote the
northwest $i\times j$ submatrix of a generic matrix $M\in {\sf Mat}_{n\times n}$. In \cite{Fulton}, it is proved that
$I_w$ is indeed generated by determinants:
\begin{equation}
\label{eqn:generators}
I_w=\langle (r_{i,j}+1)\times (r_{i,j}+1) \text{ size minors of $M^{[i,j]}$}, (i,j)\in [n]^2\rangle,
\end{equation}
and that this ideal is prime \cite[Corollary~3.13]{Fulton}.

In \emph{loc.~cit.}, W.~Fulton minimizes the \emph{description} of the generators (\ref{eqn:generators}).
The \emph{Rothe diagram} of $w$ is
\[D(w)=\{(i,j)\in [n]^2: j<w(i), i<w^{-1}(j)\}.\]
\emph{Fulton's essential set} is
\[E(w)=\{(i,j)\in D(w): (i+1,j),(i,j+1)\not\in D(w)\}.\]
W.~Fulton proved that $I_w=\langle (r_{i,j}+1)\times (r_{i,j}+1) \text{\ size minors of $M^{[i,j]}$}, (i,j)\in E(w)\rangle$.
This is a minimal list of rank conditions needed to describe $I_w$ but
does not provide a minimal set of generators. The minors in this description are called the \emph{essential minors} of $I_w$.

\begin{example}[Essential minors do not form a minimal generating set] The reader can check
\[I_{3142}=\left\langle x_{11}, x_{12},
\left|\begin{matrix} x_{21} & x_{22}\\ x_{31} & x_{32}\end{matrix}\right|,
\left|\begin{matrix} x_{11} & x_{12}\\ x_{21} & x_{22}\end{matrix}\right|,
\left|\begin{matrix} x_{11} & x_{12}\\ x_{31} & x_{32}\end{matrix}\right|\right\rangle.\]
The latter two essential minors can be dispensed with; they are implied by the first two.
\end{example}

For $I,J\subseteq [n]$ with $|I| = |J|$, define $m_{I,J}$ 
to be the determinant of the submatrix of $M$ with row and column indices $I$ and $J$ respectively. 
An essential generator $m_{I,J}$ 
\emph{belongs to} $(i,j)\in E(w)$ if $I\subseteq [i],J\subseteq[j]$ and $r = |I| = |J| = r_{i,j}+1$.
\begin{definition}
A minor $m_{I,J}$
\emph{attends} $M^{[i',j']}$ if $|I\cap[i']|> r_{i',j'}$ and $|J\cap[j']|= r_{i,j}+1$ or
$|I\cap[i']|=r_{i,j}+1$ and $|J\cap[j']|> r_{i',j'}$. 
\end{definition}

\begin{definition}
A minor $m_{I,J}$ that belongs to $(i,j)\in E(w)$ is \emph{elusive} if it does not attend $M^{[i',j']}$ for all
$(i',j')\in E(w)$ such that $r_{i',j'}<r_{i,j}$.
\end{definition}

\begin{theorem}
\label{thm:main}
$I_w$ is minimally generated by elusive minors. Moreover, for any $b\in D(w)$
there exists an elusive minor whose southeast corner is $b$.
\end{theorem}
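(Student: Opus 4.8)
The plan is to prove three things: (i) the elusive minors generate $I_w$; (ii) none of them is redundant; and (iii) every box of $D(w)$ is the southeast corner $(\max I,\max J)$ of some elusive minor $m_{I,J}$. The key preliminary observation for (ii) is that $X_w$ is a cone and $I_w$ is stable under the maximal torus $T\times T$ (being $B_-\times B$-stable), so $I_w$ is $\mathbb Z^{2n}$-graded and each minor $m_{I,J}$ is homogeneous of multidegree $(\mathbf 1_I,\mathbf 1_J)$; hence distinct minors lie in distinct graded pieces. Thus $\{$elusive minors$\}$ is a \emph{minimal} generating set as soon as it generates $I_w$ and each elusive $m_{I,J}$ lies outside $\mathfrak m I_w$, where $\mathfrak m=\langle x_{ij}\rangle$.

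For (i) I would induct on the rank $r_{i,j}$ of the essential box. If a Fulton generator $m_{I,J}$ belonging to $(i,j)\in E(w)$ is not elusive, it attends some $M^{[i',j']}$ with $(i',j')\in E(w)$ and $r_{i',j'}<r_{i,j}$; say $|I\cap[i']|>r_{i',j'}$ and $J\subseteq[j']$ (the other clause is symmetric, expanding along columns). Laplace expanding $m_{I,J}$ along the rows $I\cap[i']$ writes it as $\sum_K\pm\, m_{I\cap[i'],K}\, m_{I\setminus[i'],J\setminus K}$ over the $|I\cap[i']|$-subsets $K$ of $J$. Each $m_{I\cap[i'],K}$ is a minor of $M^{[i',j']}$ of size $|I\cap[i']|>r_{i',j'}$, hence lies in the ideal generated by the $(r_{i',j'}+1)\times(r_{i',j'}+1)$ minors of $M^{[i',j']}$; the latter are Fulton generators belonging to $(i',j')$, so by the inductive hypothesis they — and therefore $m_{I,J}$ — lie in the ideal generated by elusive minors. (The base case, an essential box of minimal rank, is vacuous: such a box has no lower essential box, so all its Fulton generators are elusive.)

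For (ii) I would use a Gröbner basis. Fix an antidiagonal term order $<$: for every minor $m_{P,Q}$, $\operatorname{in}_{<}(m_{P,Q})$ is the antidiagonal monomial $\mathbf a_{P,Q}$, and $\mathbf a_{P,Q}$ is the unique largest monomial of multidegree $(\mathbf 1_P,\mathbf 1_Q)$; by Knutson–Miller the Fulton generators form a Gröbner basis, so $\operatorname{in}_{<}(I_w)=\langle\mathbf a_{P,Q}: m_{P,Q}\text{ a Fulton generator}\rangle$. If an elusive $m_{I,J}$ lay in $\mathfrak m I_w$, write $m_{I,J}=\sum x_{kl}g_{kl}$ with each $g_{kl}\in I_w$ homogeneous of multidegree $(\mathbf 1_{I\setminus k},\mathbf 1_{J\setminus l})$. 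Matching the (nonzero) coefficient of $\mathbf a_{I,J}$, this monomial occurs in some $x_{kl}g_{kl}$, so $x_{kl}\mid\mathbf a_{I,J}$ — i.e. $(k,l)$ is an antidiagonal cell of $m_{I,J}$ — and $\mathbf a_{I,J}/x_{kl}$, which one checks equals $\mathbf a_{I\setminus k,J\setminus l}$, occurs in $g_{kl}$; being the largest monomial of its multidegree, $\operatorname{in}_{<}(g_{kl})=\mathbf a_{I\setminus k,J\setminus l}\in\operatorname{in}_{<}(I_w)$. It therefore remains to prove the \textbf{Key Lemma}: if $m_{I,J}$ is elusive, then $\mathbf a_{I,J}$ is a minimal generator of $\operatorname{in}_{<}(I_w)$, equivalently no Fulton generator $m_{P,Q}$ with $|P|<|I|$ has $\mathbf a_{P,Q}\mid\mathbf a_{I,J}$. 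Writing $I=\{i_1<\cdots<i_r\}$ and $J=\{j_1<\cdots<j_r\}$, such a divisibility forces, through the antidiagonal pairing $i_t\leftrightarrow j_{r+1-t}$, that $P=\{i_t:t\in S\}$ and $Q=\{j_{r+1-t}:t\in S\}$ for some $S\subsetneq[r]$; moreover $m_{P,Q}$ belongs to an essential box $(i'',j'')$ of rank $|P|-1<r_{i,j}$ with $P\subseteq[i'']$, $Q\subseteq[j'']$. If $1\in S$ then $\max Q=j_r=\max J$, so $J\subseteq[j'']$, and since $|I\cap[i'']|\ge|P|>r_{i'',j''}$, $m_{I,J}$ attends $M^{[i'',j'']}$, contradicting elusiveness; if $r\in S$ one argues symmetrically with rows. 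The residual case $S\subseteq\{2,\dots,r-1\}$ — where $P,Q$ occupy only ``interior'' rows and columns of $m_{I,J}$ — is, I expect, the main obstacle: one must locate a \emph{different} essential box of rank $<r_{i,j}$ attended by $m_{I,J}$, and this appears to require the fine behavior of the rank function together with the corner conditions $(i''+1,j''),(i'',j''+1)\notin D(w)$ that place $(i'',j'')$ in $E(w)$. Granting the Key Lemma, $\operatorname{in}_{<}(g_{kl})\notin\operatorname{in}_{<}(I_w)$ — a contradiction — so $m_{I,J}\notin\mathfrak m I_w$, proving (ii).

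For (iii), given $b=(a,c)\in D(w)$, note that $r_{a,c}$ is the number of $1$'s of $M_w$ in $[a-1]\times[c-1]$ (there are none in row $a$ or column $c$, since $b\in D(w)$). Traveling southeast within $D(w)$ from $b$ — repeatedly moving right or down while remaining in $D(w)$, which leaves the rank function unchanged — reaches an essential box $(i,j)$ with $i\ge a$, $j\ge c$ and $r_{i,j}=r_{a,c}$. Set $I=\{a\}\cup\{\text{rows of the }r_{a,c}\text{ ones of }M_w\text{ in }[a-1]\times[c-1]\}$ and $J=\{c\}\cup\{\text{their columns}\}$. Then $|I|=|J|=r_{a,c}+1=r_{i,j}+1$, $I\subseteq[i]$, $J\subseteq[j]$, so $m_{I,J}$ belongs to $(i,j)$, and its southeast corner is exactly $(a,c)$. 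Finally $m_{I,J}$ is elusive: if it attended $M^{[i',j']}$ with $(i',j')\in E(w)$, $r_{i',j'}<r_{a,c}$, say with $J\subseteq[j']$ and $|I\cap[i']|>r_{i',j'}$, then $j'\ge c$; if moreover $i'<a$, then $|I\cap[i']|$ is the number of $1$'s of $M_w$ in $[i']\times[c-1]$, which is at most the number in $[i']\times[j']$, i.e. $\le r_{i',j'}$ — impossible — so $i'\ge a$, whence $(i',j')$ is weakly southeast of $(a,c)$ and $r_{i',j'}\ge r_{a,c}$, a contradiction; the other attendance clause is symmetric. This proves (iii), and completes the plan.
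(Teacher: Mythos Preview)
Your plan for (i) matches the paper's, and your construction in (iii) is correct, though different: the paper instead takes the contiguous intervals $I=[i-r,i]$, $J=[j-r,j]$ with $(i,j)=b$ and argues directly that this minor is elusive. The genuine gap is in (ii): you correctly isolate the Key Lemma but explicitly leave the residual case $S\subseteq\{2,\dots,r-1\}$ unproved, and that case is the entire content of the argument.

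The paper does not route through the Gr\"obner basis or the $\mathbb Z^{2n}$-grading. It proves necessity by exhibiting, for each elusive $m_{I,J}$, the point $P\in{\sf Mat}_{n\times n}$ with $1$'s exactly on the antidiagonal positions of $m_{I,J}$; then $m_{I,J}(P)\ne0$, and one must show every other essential minor vanishes at $P$. As you effectively observe, a minor $m_{P',Q'}$ fails to vanish at $P$ iff $\mathbf a_{P',Q'}\mid\mathbf a_{I,J}$, so this is precisely your Key Lemma. The missing ingredient is the rank inequality
\[
r_{i_k,j}\ge k\quad\text{and}\quad r_{i,j_k}\ge k\qquad(1\le k\le r),
\]
proved by induction on $k$: if $(i_k,j)\in D(w)$ with $r_{i_k,j}=k-1$, the essential box weakly southeast of $(i_k,j)$ in its connected component is attended by $m_{I,J}$, contradicting elusiveness; if $(i_k,j)\notin D(w)$ then $w(i_k)<j$ (since $(i,j)\in D(w)$ and $i>i_k$), so the rank increases from row $i_{k-1}$ to row $i_k$. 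With this in hand, suppose $m'$ belongs to $e'=(i',j')$ with $r_{e'}<r$ and does not vanish at $P$; your easy cases dispose of $e'$ outside $[1,i_{r+1}-1]\times[1,j_{r+1}-1]$, so assume $i_p\le i'<i_{p+1}$ and $j_\ell\le j'<j_{\ell+1}$. Nonvanishing forces $p+\ell>r+1$ and $r_{i',j'}<p+\ell-(r+1)$, hence $r_{i_p,j_\ell}<p+\ell-(r+1)$; combining with $r_{i,j_\ell}\ge\ell$ and submodularity $r_{i_p,j}-r_{i_p,j_\ell}\le r_{i,j}-r_{i,j_\ell}\le r-\ell$ gives $r_{i_p,j}<p-1$, contradicting the rank inequality. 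That is the whole residual case.

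Two further remarks. First, your detour through Knutson--Miller is unnecessary: the point-$P$ argument is elementary and self-contained, and the paper \emph{deduces} the Gr\"obner statement for elusive minors as a corollary of this theorem rather than the reverse. Second, your guess that the residual case hinges on the corner conditions defining $e'\in E(w)$ is off target; what it actually uses is elusiveness of $m_{I,J}$ with respect to \emph{other} essential boxes, and this enters only through the proof of the rank inequality above.
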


\begin{example}\label{exa:enumerate}
Let $w=619723458$. An example of elusive minor is $m_{\{1,2,3\},\{5,7,8\}}$, whereas $m_{\{1,2,3\},\{4,5,8\}}$ is not elusive since it attends $M^{[4,5]}$. 

Theorem~\ref{thm:main} is a handy way to hand compute the size of a minimal
generating set. Here, the minimal generating set
contains $5$ generators of degree $1$, ${3 \choose 2}{5\choose 2}$ generators of degree $2$ and $1+{5\choose 1}{3 \choose 2}$
generators of degree $3$. All $5$ degree $1$ essential minors are elusive, a degree $2$ essential minor is elusive if and only if $1\notin I$, and a degree $3$ essential minor is elusive if and only if $|J\cap[5]|\leq 1$.

\begin{figure}[h!]
\centering
\begin{tikzpicture}[scale = 0.5]
    \fill [gray, opacity  = 0.25] (0,9) rectangle (5,8);
    \fill [gray, opacity  = 0.25] (1,7) rectangle (5,5);
    \fill [gray, opacity  = 0.25] (6,7) rectangle (8,6);
    \draw (0,0)--(9,0)--(9,9)--(0,9)--(0,0);
    \draw (1,0) -- (1,9);
    \draw (2,0) -- (2,9);
    \draw (3,0) -- (3,9);
    \draw (4,0) -- (4,9);
    \draw (5,0) -- (5,9);
    \draw (6,0) -- (6,9);
    \draw (7,0) -- (7,9);
    \draw (8,0) -- (8,9);
    \draw (0,1) -- (9,1);
    \draw (0,2) -- (9,2);
    \draw (0,3) -- (9,3);
    \draw (0,4) -- (9,4);
    \draw (0,5) -- (9,5);
    \draw (0,6) -- (9,6);
    \draw (0,7) -- (9,7);
    \draw (0,8) -- (9,8);

    \node at (5.5,8.5) {$\bullet$};
    \node at (0.5,7.5) {$\bullet$};
    \node at (8.5,6.5) {$\bullet$};
    \node at (6.5,5.5) {$\bullet$};
    \node at (1.5,4.5) {$\bullet$};
    \node at (2.5,3.5) {$\bullet$};
    \node at (3.5,2.5) {$\bullet$};
    \node at (4.5,1.5) {$\bullet$};
    \node at (7.5,0.5) {$\bullet$};
    \draw[line width = 0.35mm] (5.5,0) -- (5.5,8.5) -- (9,8.5);
    \draw[line width = 0.35mm] (0.5,0) -- (0.5,7.5) -- (9,7.5);
    \draw[line width = 0.35mm] (8.5,0) -- (8.5,6.5) -- (9,6.5);
    \draw[line width = 0.35mm] (6.5,0) -- (6.5,5.5) -- (9,5.5);
    \draw[line width = 0.35mm] (1.5,0) -- (1.5,4.5) -- (9,4.5);
    \draw[line width = 0.35mm] (2.5,0) -- (2.5,3.5) -- (9,3.5);
    \draw[line width = 0.35mm] (3.5,0) -- (3.5,2.5) -- (9,2.5);
    \draw[line width = 0.35mm] (4.5,0) -- (4.5,1.5) -- (9,1.5);
    \draw[line width = 0.35mm] (7.5,0) -- (7.5,0.5) -- (9,0.5);
    \end{tikzpicture}
    \caption{Rothe diagram of $w=619723458$, the boxes of $D(w)$ are shaded.}
\end{figure}
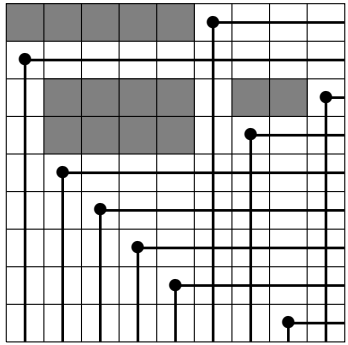

An exercise is $w=13865742$ \cite[Example~1.3.5]{KM:annals}. A minimum generating set is of size $104$ consists of $21$ many $2\times 2$ minors and $83$
many $3\times 3$ minors.\footnote{This example is also considered in the unpublished Section~3 of earlier (v1, v2) {\sf arXiv} preprint versions of \cite{KM:annals}. The notion of \emph{attends} is more general than ``\emph{causes}'' used in those preprints and the published version.} 
\end{example}

A.~Knutson-E.~Miller \cite[Theorem~B]{KM:annals} proves that the essential minors of $I_w$ form a Gr\"obner basis with respect to any \emph{antidiagonal term order} $\prec$, that
is, a monomial order that picks the antidiagonal term of an minor as the initial term (one example is the lexicographic ordering obtained by reading the rows of the generic matrix right to left in rows, and from top to bottom). 
A.~Knutson, E.~Miller and the second author \cite[Theorem~3.8]{KMY} prove the same result for any \emph{diagonal term order}
$\prec'$ (where the order picks the diagonal term of an minor as the initial term), but under the hypothesis that $w$ is \emph{vexillary} (that is $2143$-avoiding; see the definition of pattern
avoidance below). We refine these statements:

\begin{corollary}
\label{cor:Grobner}
The set of elusive minors is a Gr\"obner basis for $I_w$ under an antidiagonal term order $\prec$. 
If we assume $w$ is vexillary, the same statement is true under diagonal term order $\prec'$.
That is, in either case, $I_w$ has a Gr\"obner basis given by a set of
minimal generators.
\end{corollary}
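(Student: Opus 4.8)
The plan is to deduce this from the two known Gr\"obner basis results together with the combinatorics of elusiveness. Recall that \cite[Theorem~B]{KM:annals} says the essential minors form a Gr\"obner basis for $I_w$ under any antidiagonal term order $\prec$, and \cite[Theorem~3.8]{KMY} says the same under any diagonal term order $\prec'$ when $w$ is vexillary; equivalently, in either setting the initial ideal $\mathrm{in}(I_w)$ is generated by the initial (antidiagonal, resp.\ diagonal) monomials of the essential minors. The elusive minors are a subset of the essential minors and all lie in $I_w$, so it suffices to show that for every essential minor $m_{I,J}$ there is an elusive minor $m_{I',J'}$ whose initial monomial divides that of $m_{I,J}$. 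Granting this, the initial monomials of the elusive minors already generate $\mathrm{in}(I_w)$, which is exactly the statement that the elusive minors form a Gr\"obner basis; that they then form a \emph{minimal} generating set is Theorem~\ref{thm:main}.

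The crux is a divisibility lemma. Suppose $m_{I,J}$ belongs to $(i,j)\in E(w)$ and attends $M^{[i',j']}$ for some $(i',j')\in E(w)$ with $r_{i',j'}<r_{i,j}$; write $r=r_{i,j}+1=|I|=|J|$ and $r'=r_{i',j'}+1$, so $r'\le r-1<r$. I claim one can select $I'\subseteq I$ and $J'\subseteq J$ with $|I'|=|J'|=r'$, $I'\subseteq[i']$ and $J'\subseteq[j']$ — whence $m_{I',J'}$ is an essential minor belonging to $(i',j')$ — such that the initial monomial of $m_{I',J'}$ divides that of $m_{I,J}$. In the first case of the definition of ``attends'' one has $J\subseteq[j']$ (since $|J\cap[j']|=|J|$) and $|I\cap[i']|\ge r'$: for $\prec$ take $I'$ to be the $r'$ smallest elements of $I$ and $J'$ the $r'$ largest elements of $J$, so that the antidiagonal monomial of $m_{I',J'}$ is the length-$r'$ initial segment of the antidiagonal monomial $\prod_{k}x_{i_k j_{r+1-k}}$ of $m_{I,J}$; for $\prec'$ take $I'$ and $J'$ to be the $r'$ smallest elements of $I$ and of $J$, so the diagonal monomial of $m_{I',J'}$ is the length-$r'$ initial segment of $\prod_k x_{i_k j_k}$. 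In the second case $I\subseteq[i']$ and $|J\cap[j']|\ge r'$: for $\prec$ take $I'$ the $r'$ largest elements of $I$ and $J'$ the $r'$ smallest elements of $J$ (now the antidiagonal monomial of $m_{I',J'}$ is the terminal segment of that of $m_{I,J}$); for $\prec'$ again take the $r'$ smallest of each. In each case the relevant inequality $|I\cap[i']|\ge r'$ or $|J\cap[j']|\ge r'$, together with $I\subseteq[i']$ or $J\subseteq[j']$ in the other coordinate, guarantees $I'\subseteq[i']$ and $J'\subseteq[j']$, so $m_{I',J'}$ genuinely belongs to $(i',j')$.

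With the lemma in hand, induct on $r_{i,j}$. If $m_{I,J}$ is elusive, we are done. Otherwise $m_{I,J}$ attends some $M^{[i',j']}$ with $(i',j')\in E(w)$ and $r_{i',j'}<r_{i,j}$, and the lemma yields an essential minor $m_{I',J'}$ belonging to $(i',j')$ whose initial monomial divides that of $m_{I,J}$; by induction some elusive minor has initial monomial dividing that of $m_{I',J'}$, hence that of $m_{I,J}$. The base case is automatic: an essential minor belonging to a box of minimal rank in $E(w)$ cannot attend any $M^{[i',j']}$ of strictly smaller rank, so it is elusive. This finishes the reduction and the corollary.

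The only genuine work is the bookkeeping inside the divisibility lemma: choosing, in each of the two branches of ``attends'', the sub-minor whose antidiagonal (resp.\ diagonal) monomial is literally a divisor of the original's, and checking the row/column containments certifying membership in $(i',j')$. Everything else is formal, using only that a subset of $I_w$ is a Gr\"obner basis as soon as its initial terms generate $\mathrm{in}(I_w)$. No facts about $X_w$ beyond \cite{KM:annals,KMY} and Theorem~\ref{thm:main} are needed, and the vexillary hypothesis is used solely to invoke \cite{KMY}, precisely where the diagonal base case lives.
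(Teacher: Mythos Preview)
Your proof is correct and follows essentially the same strategy as the paper: invoke the known Gr\"obner basis theorems for the essential minors, then show by induction on rank that every essential minor's initial term is divisible by that of some elusive minor, via an explicitly chosen sub-minor belonging to a lower-rank essential box. One small difference worth noting: for the diagonal case the paper appeals to the structural fact that in a vexillary permutation no two essential boxes are strictly northwest/southeast of one another, and says the argument ``follows like the first''; you instead directly take $I'$ and $J'$ to be the $r'$ smallest indices of $I$ and $J$ in both branches of ``attends'', which works without that structural fact. This is a mild simplification---the vexillary hypothesis is then used only to invoke \cite{KMY}, exactly as you say.
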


The codimension of $X_w\subseteq \Bbbk^{n^2}$ is $\ell(w)=\#D(w)$, that is, the number of inversions of $w$
\cite[Corollary~3.13]{Fulton}. Since the size of a minimal generating set is an invariant, $X_w$
is  a complete intersection if and only if the size of the set of its elusive minors is $\ell(w)$. Using this, we give a self-contained proof of the result below of H.~Ulfarsson-A.~Woo \cite[Corollary~6.3]{Woo}, which is a pattern avoidance characterization of matrix Schubert varieties that are complete intersections. Their result came after an earlier characterization by J.~C.~Hsiao \cite[Theorem~5.2]{Hsiao} which depends on the Gr\"obner basis theorem of \cite[Theorem~B]{KM:annals}.

Recall $w\in {\mathfrak S}_n$ \emph{pattern includes} $u\in S_{m}$ if there exist indices $i_1<i_2<\ldots<i_m$
such that $w(i_1),\ldots, w(i_m)$ is in the same relative order as $u(1),\ldots,u(m)$. Furthermore, $w$ avoids
$u$ if no such indices exist.

\begin{corollary}[\cite{Hsiao, Woo}]\label{cor:lci}
$X_w$ is a complete intersection if and only if $w$ avoids $1342, 1432, 1423$.\footnote{In \cite[Corollary~6.3]{Woo}, the additional patterns $31524,24153$ and $351624$ are listed. However these follow from the size $4$ pattterns.}
\end{corollary}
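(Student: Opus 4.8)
The plan is to use the characterization that $X_w$ is a complete intersection if and only if the number of elusive minors equals $\ell(w) = \#D(w)$ (which follows from Theorem~\ref{thm:main}, since the codimension is $\ell(w)$ and the minimal generator count is an invariant). By the ``moreover'' clause of Theorem~\ref{thm:main}, every box $b \in D(w)$ is the southeast corner of at least one elusive minor; hence $X_w$ is a complete intersection exactly when \emph{each} box of $D(w)$ is the southeast corner of \emph{exactly} one elusive minor. So the strategy splits into two implications, both attacked via the combinatorics of $D(w)$, $E(w)$, and the rank function.

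For the forward direction (pattern containment $\Rightarrow$ not a complete intersection), I would take $w$ containing one of $1342$, $1432$, $1423$ and produce a box $b \in D(w)$ admitting two distinct elusive minors with southeast corner $b$. The natural move is to first reduce to the case where $w$ itself is one of these three patterns (or a small permutation realizing it), by an argument that inserting/deleting rows and columns outside the pattern does not destroy the multiplicity of elusive minors at a suitably chosen $b$ — essentially a monotonicity statement for the count of elusive minors under pattern containment. Then for each of the three length-$3$ patterns one checks directly, as in Example~\ref{exa:enumerate}-style bookkeeping, that some box sits at the southeast corner of $\geq 2$ elusive minors; the degree-$2$ or degree-$3$ essential minors belonging to the relevant essential box will fail to ``attend'' any earlier $M^{[i',j']}$ in more than one way. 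The patterns $31524$, $24153$, $351624$ of \cite{Woo} are then subsumed because they contain one of the three length-$4$ patterns.

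For the reverse direction ($w$ avoids $1342, 1432, 1423$ $\Rightarrow$ complete intersection), I would show that under this avoidance hypothesis every box $b \in D(w)$ has a \emph{unique} elusive minor with southeast corner $b$. Here the key structural fact to establish is that avoidance of these patterns forces the essential set and rank function to be ``thin'' enough — for instance, that for the essential box $(i,j)$ governing $b$, the positions of the $r_{i,j}$ many $1$'s weakly northwest of $(i,j)$, together with the location of $b$, leave no freedom in choosing $I, J$ once we impose that $m_{I,J}$ is elusive (i.e. does not attend any lower-rank $M^{[i',j']}$ in $E(w)$). Concretely I expect to argue that a second elusive minor at $b$ would force a configuration of $1$'s in $M_w$ realizing a $1342$, $1432$, or $1423$ pattern.

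The main obstacle will be this last step: translating the existence of two elusive minors at a common southeast corner into an explicit forbidden pattern. One has to carefully analyze how the ``attends'' condition constrains $I \cap [i']$ and $J \cap [j']$ across all essential boxes $(i',j')$ of smaller rank, and extract from a genuine choice between two minors a triple or quadruple of rows whose $w$-values are in the order $1342$, $1432$, or $1423$. I would organize this by fixing the essential box $(i,j) \ni b$ of maximal rank, looking at where $I$ and $J$ can differ between the two minors, and tracking the $1$'s of $M_w$ that block the ``attends'' condition at the intermediate essential boxes; the case analysis over which of the three patterns appears should mirror the three ways a ``fork'' in the choice of $I$ or $J$ can occur. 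Conversely, for the forward direction the obstacle is the reduction to the pattern itself, which requires knowing that elusive-minor multiplicity is preserved under the embedding of a pattern into a larger permutation — a claim I would prove by a direct inspection of how $r_{i,j}$, $D(w)$, and $E(w)$ behave under the standard row/column insertions.
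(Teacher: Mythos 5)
Your high-level framework matches the paper's: count elusive minors, use the codimension $\ell(w)=\#D(w)$, and exploit the fact from Theorem~\ref{thm:main} that every box of $D(w)$ is the southeast corner of at least one elusive minor. But there are two real problems.

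First, your reformulation ``$X_w$ is a complete intersection exactly when each box of $D(w)$ is the southeast corner of exactly one elusive minor'' is not an equivalence. Elusive minors can have southeast corners outside $D(w)$: a minor $m_{I,J}$ belonging to $e\in E(w)$ has southeast corner $(\max I,\max J)$, which need not lie in $D(w)$. So one could have exactly one elusive minor per box of $D(w)$ and still have extra elusive minors elsewhere, violating the count. When proving the contrapositive of $(\Leftarrow)$ you must therefore handle two cases: a box of $D(w)$ with two elusive minors, or an elusive minor whose southeast corner lies outside $D(w)$. The paper explicitly addresses the second case and collapses it into the first using Lemma~\ref{shifting} (the Shifting Lemma) together with Proposition~\ref{prop:elusiveSE}; your proposal does not notice the second case exists.

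Second, and more seriously, your $(\Rightarrow)$ direction rests on an unproven ``monotonicity statement for the count of elusive minors under pattern containment,'' i.e.\ that if $w$ contains a forbidden pattern then the multiplicity of elusive minors at some box transfers from the small permutation to $w$. You flag this as ``the obstacle'' but offer no argument, and it is not a routine reduction: the essential set, the rank function, and hence the attends relation of $w$ are not simply inherited from the pattern, so the local structure at a box of the pattern can be altered by the ambient $w$. The paper avoids this entirely: it works directly inside $w$, chooses $e=(a_3,b_2)\in D(w)$ from the pattern occurrence, invokes Proposition~\ref{prop:elusiveSE} for one elusive minor at $e$, and then explicitly constructs a second elusive minor $m_{I',J}$ at $e$ by shifting the top row of $I$ up by one, using the placement of the pattern's $1$'s to verify elusiveness. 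It also uses Lemma~\ref{lemma:firstone} (the $w\leftrightarrow w^{-1}$ symmetry) to handle $1423$ via $1342$, reducing the case analysis to two patterns. Without either the direct construction or a correct proof of your monotonicity claim, the forward direction does not go through.

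Your sketch of the reverse direction (two elusive minors at a common corner forcing a $1342$/$1432$/$1423$ occurrence) is aimed in the right direction and matches the paper's strategy in spirit, but as written it is only an outline and, as noted above, it omits the out-of-diagram case.
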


In fact, the proofs of \cite{Hsiao, Woo} construct a minimal set of generators for $I_w$ to prove ``$\Leftarrow$''. However, their arguments do not do this outside of those cases.

\section{Proofs}

\subsection{Proof of Theorem~\ref{thm:main}}
Suppose a minor $m$ belonging to $(i,j)\in E(w)$ is not elusive; say it attends $M^{[i',j']}$ for $(i',j')\in E(w)$ satisfying $r_{i',j'}(w)<r_{i,j}(w)$. Then it follows by induction using cofactor expansion that $m$ is in the ideal
generated by the $(r_{i',j'}+1)\times (r_{i',j'}+1)$ minors belonging to $M^{[i',j']}$. Hence $m$ can be dispensed with.

Conversely, suppose $m=m_{I,J}$  belonging to $e=(i,j)\in E(w)$ is elusive.
Let $I=\{1\leq i_1<i_{2}<\ldots<i_{r+1}\leq i\}$ and $J=\{1\leq j_1<j_2<\ldots<j_{r+1}\leq j\}$ where $r=r_{i,j}(w)$. 
\begin{claim}\label{claim:rank}
    For any $1\leq k<r+1$, we have $r_{i_k,j}\geq k$ and $r_{i,j_k}\geq k$.
\end{claim}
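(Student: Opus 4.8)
The plan is to prove the contrapositive for each inequality: if $r_{i_k,j} < k$ for some $k$ in the stated range, then $m = m_{I,J}$ attends some $M^{[i',j']}$ with $(i',j')\in E(w)$ and $r_{i',j'} < r_{i,j}$, contradicting elusiveness. (The statement for $r_{i,j_k}$ follows by the symmetry $w \mapsto w^{-1}$, which transposes everything.) So suppose $r_{i_k,j} \le k-1$. I would first locate a good row index: let $i'$ be the largest row index with $i_k \le i' \le i$ and $r_{i',j} = r_{i_k,j}$; equivalently, push down as far as possible while keeping the rank in the first $j$ columns fixed. Then either $i' = i$, or $r_{i'+1,j} > r_{i',j}$. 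The key point is to show we may take $(i',j)$ — or a slight adjustment of it — to lie in $E(w)$: since the rank in the first $j$ columns strictly increases when we pass from row $i'$ to $i'+1$, the box $(i'+1,j)$ "carries" a new $1$, and one checks this forces $(i',j')\in E(w)$ for the appropriate $j' \le j$ (namely, $j'$ the largest column with $r_{i',j'} = r_{i',j}$, using that $(i,j)$ itself is essential so $j$ is already as large as possible for row $i$).

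With $(i',j')\in E(w)$ in hand, I need to verify the two conditions: $r_{i',j'} < r_{i,j} = r$, and that $m_{I,J}$ attends $M^{[i',j']}$, i.e. $|I\cap[i']| > r_{i',j'}$ and $|J\cap[j']| = r_{i,j}+1 = r+1$. For the rank inequality: $r_{i',j'} = r_{i',j} = r_{i_k,j} \le k-1 \le r-1 < r$ since $k \le r$. For $|I\cap[i']|$: by construction $i' \ge i_k$, so $\{i_1,\dots,i_k\}\subseteq [i']$, giving $|I\cap[i']| \ge k \ge r_{i',j'} + 1 > r_{i',j'}$. For $|J\cap[j']|$: I need $j' = j$, or more precisely that all of $J$ lies in $[j']$. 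This is where I'd use that $(i,j)$ is the southeast essential box for $m$ and that $j' $ was chosen maximal for the fixed rank value in row $i'$ — combined with $J\subseteq[j]$ and $r_{i',j} = r_{i',j'}$, monotonicity of the rank function should force $J \subseteq [j']$ (if some $j_\ell > j'$, the rank would have to be strictly larger somewhere, contradicting maximality of $j'$ or the rank bound). Thus $|J\cap[j']| = |J| = r+1$, and $m_{I,J}$ attends $M^{[i',j']}$, the desired contradiction.

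The main obstacle I anticipate is the bookkeeping in the previous paragraph: precisely pinning down the column index $j'$ so that simultaneously $(i',j')\in E(w)$, the rank stays at the small value $r_{i_k,j}$, and all column indices of $J$ are $\le j'$. The essential-set membership is the subtle part — one must rule out that $(i',j+1)\in D(w)$ or $(i'+1,j')\in D(w)$, which requires carefully using both the definition of $E(w)$ at the original box $(i,j)$ and the extremality of the chosen $i', j'$. Once the correct indices are identified, the three numerical checks (rank inequality, the $|I|$ condition, the $|J|$ condition) are short. After the claim, the intended use is presumably: the inequalities $r_{i_k,j}\ge k$ and $r_{i,j_k}\ge k$ say that the rows $i_1,\dots,i_{r+1}$ and columns $j_1,\dots,j_{r+1}$ of an elusive minor are "spread out" enough that the antidiagonal (resp.\ diagonal) term of $m_{I,J}$ is not killed, which should feed into showing elusive minors generate $I_w$ and form a Gr\"obner basis.
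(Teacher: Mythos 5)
Your approach — assume $r_{i_k,j}<k$ and locate an essential box $(i',j')$ that $m_{I,J}$ attends, contradicting elusiveness — is the same underlying mechanism as the paper's, but the paper does it by induction on $k$, and that induction is not optional. Your direct, non-inductive construction has a genuine hole.

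The hole is that $(i',j')$ need not lie in $D(w)$ at all, let alone in $E(w)$. With your choice, $i'$ is the largest row in $[i_k,i]$ with $r_{i',j}=r_{i_k,j}$, and one checks $i'<i$. If $i'>i_k$, then $r_{i'-1,j}=r_{i',j}$ forces $w(i')>j$, and from there one can indeed verify $(i',j')\in E(w)$ (with $j'$ the \emph{largest} column having $r_{i',j'}=r_{i',j}$; by monotonicity this gives $j'\geq j$, not $j'\leq j$ as you wrote, which actually works in your favor since then $J\subseteq[j']$ automatically). But when $i'=i_k$, nothing prevents $w(i_k)<j$, and in that case $w(i')=w(i_k)<j\leq j'$ immediately gives $(i',j')\notin D(w)$. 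So the sentence ``one checks this forces $(i',j')\in E(w)$'' is false in this case, and the obstacles you flagged (ruling out $(i',j'+1),(i'+1,j')\in D(w)$) are not the real obstruction — it's $(i',j')\in D(w)$ itself that can fail. There is no essential box sitting at a corner of the ``rank plateau'' you built, because the plateau doesn't start inside the diagram.

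This is exactly the case the paper's induction is designed to absorb: when $(i_s,j)\notin D(w)$, since $(i,j)\in D(w)$ and $i>i_s$ one gets $w(i_s)<j$, so the dot in row $i_s$ lands in $[i_s]\times[j]$ and $r_{i_s,j}\geq r_{i_s-1,j}+1\geq r_{i_{s-1},j}+1\geq s$ by the inductive hypothesis — no attending box needed in this branch. If you try to patch your argument by recursing to row $i_k-1$ when $w(i_k)<j$ (noting $r_{i_k-1,j}=r_{i_k,j}-1<k-1$ and $|I\cap[i_k-1]|=k-1$), you will find yourself rebuilding precisely the paper's induction. So the fix is to reorganize as an induction on $k$ and split on whether $(i_k,j)\in D(w)$; in the diagram case your ``attending box'' idea is the right move (and the paper gets the box cheaply by taking the essential box of the connected component of $(i_k,j)$ in $D(w)$, rather than by your $i'$/$j'$ extremal construction), and in the non-diagram case use the rank jump forced by $w(i_k)<j$.
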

\noindent \emph{Proof of Claim~\ref{claim:rank}:} We will prove $r_{i_k,j}\geq k$; $r_{i,j_k}\geq k$ follows from the same reasoning. 
We proceed by induction. For $k=1$, suppose $r_{i_1,j} = 0 <k$, then $(i_1,j)\in D(w)$. There is $(a,b)\in E(w)$ 
weakly southeast of $(i_1,j)$ and in the same connected component of $D(w)$. Since $r_{a,b} = r_{i_1,j} = 0$, $b\geq j$ and $I\cap[a]\geq 1>0$, $m_{I,J}$ attends $M^{[a,b]}$, a contradiction. 

Now suppose $r_{i_k,j}\geq k$ for all $1\leq k<s$ for some $s<r+1$. If $(i_{s},j)\in D(w)$, there is a $(a,b)\in E(w)$ that is in the same connected component as $(i_{s},j)$ and weakly southeast of $(i_{s},j)$. If $r_{i_{s},j} = s-1$, then $m_{I,J}$ attends $M^{[a,b]}$, a contradiction. So $r_{i_{s},j}\geq s$ in this case. Now if $(i_{s},j)\notin D(w)$, since $(i,j)\in D(w)$ and $i>i_{s}$, we know that $w(i_{s}) < j$. Since $r_{{i_{s-1}},j}\geq s-1$, we see $r_{i_{s},j}\geq r_{i_{s-1},j}+1 = s$, completing the induction step.
\qed

 To prove that $m=m_{I,J}$ is necessary as a generator, it suffices
to find a point $P\in {\sf Mat}_{n\times n}$ such that $m$ does not vanish at $P$ but every other essential generator does vanish.
Set
$P_{a,b}=1$ if $a=i_t$ and $b=j_{r-t+2}$ for $1\leq t\leq r+1$, and let all other entries be $0$. In words, $P$ places $1$'s on the antidiagonal of $m$. Evidently $m$ does not vanish at $P$.

It remains to prove all other essential minors do vanish at $P$. Suppose, to the contrary that 
$m'$ is a minor that belongs to $e'\in E(w)$ but does not vanish at $P$. Since the only minor of size at least $r_e(w)+1$ that does not vanish at $P$ is $m$, $r_{e'}(w)<r_e(w)$. Let $e' = (i',j')$. If $e'$ is not in the rectangle with corners $(1,1)$ and $(i_{r+1}-1,j_{r+1}-1)$,
by definition, $m$ attends $M^{[e']}$ contradicting the assumption that $m$ is elusive. 
Thus, the only possibility is that $i'<i_{r+1}$ and $j'<j_{r+1}$ as depicted in Figure~\ref{fig:Jan9aaa}.
\begin{figure}[h]
    \begin{center}
    
        \begin{tikzpicture}[scale=0.45]
        \fill[color=black!40, fill=yellow, very thick] (7,8) rectangle (6,9);
        \fill[color=black!40, fill=yellow, very thick] (4,6) rectangle (3,7);
        \fill[color=black!40, fill=yellow, very thick] (2,3) rectangle (1,4);
        \draw[black, thick] (0,0) -- (10,0) -- (10,10) -- (0,10) -- (0,0);
        \draw[black, thick] (7,1) -- (8,1) -- (8,2) -- (7,2) -- (7,1);
        \draw[black, thick] (5,4) -- (6,4) -- (6,5) -- (5,5) -- (5,4);
        \draw[red, thick, dashed] (0,8) -- (8,8);
        \draw[red, thick, dashed] (2,10) -- (2,1);
        \draw[blue, thick] (0,6) -- (8,6);
        \draw[blue, thick] (4,10) -- (4,1);
        \draw (7.5,1.5) node{$e$};
        \draw (5.5,4.5) node{$e'$};
        \draw (-0.35, 1.5) node{$i$};
        \draw (-0.35, 3.5) node{$i_3$};
        \draw (-0.35, 6.5) node{$i_2$};
        \draw (-0.35, 8.5) node{$i_1$};
        \draw (-0.35, 4.5) node{$i'$};
        \draw (7.5, 10.45) node{$j$};
        \draw (6.5, 10.45) node{$j_3$};
        \draw (3.5, 10.45) node{$j_2$};
        \draw (1.5, 10.45) node{$j_1$};
        \draw (5.5, 10.45) node{$j'$};
        \draw (6.5, 8.5) node{$1$};
        \draw (3.5, 6.5) node{$1$};
        \draw (1.5, 3.5) node{$1$};
        \draw[gray, dashed] (0,1) -- (7,1);
        \draw[gray, dashed] (8,10) -- (8,2);
        \end{tikzpicture}
    \caption{Relative position of $e,e'$ and $P$}
    \label{fig:Jan9aaa}    
    \end{center}
\end{figure}
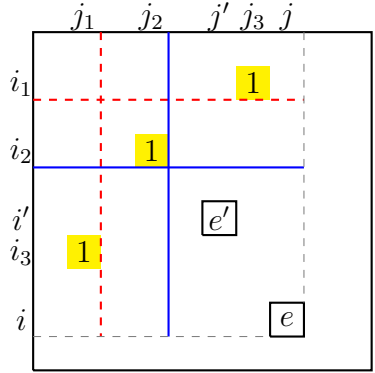

Let us assume that $i_p\leq i'<i_{p+1}$ and $j_{\ell}\leq j'<j_{\ell+1}$ for some $0\leq p,\ell<r+1$, where $i_0 = j_0: = 1$
(in the figure, $p=\ell=2$). Since $m'$ does not vanish at $P$, it is straightforward that $p+\ell>r+1$ 
(otherwise, $m'$ only involves $0$ entries)
and $r_{i',j'}<p+\ell-(r+1)$ (the right hand is the number of $1$'s that appear in the northwest $i_p\times j_{\ell}$
rectangle of $M_w$).
In particular, this implies that 
\[r_{i_p,j_\ell}<p+\ell-(r+1).\]
By Claim~\ref{claim:rank}, $r_{i,j_\ell}\geq \ell$. Since $r_{i,j} = r$, we obtain
\[r_{i_p,j}-r_{i_p,j_\ell}\leq r_{i,j}-r_{i,j_\ell} \leq r-\ell.\]
Hence,
\[r_{i_p,j}<p+\ell-(r+1)+r-\ell = p-1.\]
Yet by Claim~\ref{claim:rank}, $r_{i_p,j}\geq p$, a contradiction. So all other essential minors vanish on $P$.

We now turn to the second statement of the theorem, restated here in more exact form:

\begin{proposition}\label{prop:elusiveSE}
    For $b \!=\! (i,j)\!\in\! D(w)$, $m_{[i-r,i],[j-r,j]}$ is an elusive minor with southeast corner $b$.
\end{proposition}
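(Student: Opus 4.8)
The plan is to establish, in order: (i) $I:=[i-r,i]$ and $J:=[j-r,j]$, with $r:=r_{i,j}(w)$, are genuine $(r{+}1)$-element subsets of $[i]$ and $[j]$; (ii) $m_{I,J}$ belongs to some box of $E(w)$, hence is one of Fulton's essential generators; and (iii) $m_{I,J}$ attends no $M^{[i',j']}$ with $(i',j')\in E(w)$ and $r_{i',j'}(w)<r$, so it is elusive. Its southeast corner is $(\max I,\max J)=(i,j)=b$ by construction, giving the ``moreover'' clause. Step (i) is pure numerology: since $(i,j)\in D(w)$ the $1$ in row $i$ of $M_w$ lies strictly east of column $j$ and the $1$ in column $j$ lies strictly south of row $i$, so the $r$ ones counted by $r_{i,j}(w)$ occupy $r$ distinct rows among $1,\dots,i-1$ and $r$ distinct columns among $1,\dots,j-1$; thus $r\le i-1$ and $r\le j-1$.

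For step (ii) I would first record that $r_{\cdot,\cdot}(w)$ is constant on each connected component of $D(w)$: for horizontally adjacent boxes $(a,b),(a,b{+}1)\in D(w)$, column $b{+}1$ carries no $1$ weakly above row $a$, so $r_{a,b+1}=r_{a,b}$, and symmetrically for vertical adjacency. Then walk southeast from $b$ inside $D(w)$ --- repeatedly step right while $D(w)$ permits, then step down once if possible, and iterate. The coordinates only increase, so the walk terminates, necessarily at a box $(\hat\imath,\hat\jmath)\in D(w)$ with $(\hat\imath{+}1,\hat\jmath),(\hat\imath,\hat\jmath{+}1)\notin D(w)$, i.e.\ $(\hat\imath,\hat\jmath)\in E(w)$; moreover $\hat\imath\ge i$, $\hat\jmath\ge j$, and $(\hat\imath,\hat\jmath)$ lies in the component of $b$, so $r_{\hat\imath,\hat\jmath}(w)=r$. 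Consequently $I\subseteq[\hat\imath]$, $J\subseteq[\hat\jmath]$, and $|I|=|J|=r{+}1=r_{\hat\imath,\hat\jmath}+1$, so $m_{I,J}$ belongs to $(\hat\imath,\hat\jmath)\in E(w)$.

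Step (iii) is the heart of the matter. Assume for contradiction that $m_{I,J}$ attends $M^{[i',j']}$ for some $(i',j')\in E(w)$ with $r_{i',j'}(w)<r$. Using the transpose symmetry $w\leftrightarrow w^{-1}$ (which interchanges rows and columns, carries $m_{I,J}$, $D(w)$, $E(w)$, $r_{\cdot,\cdot}(w)$ to $m_{J,I}$, $D(w^{-1})$, $E(w^{-1})$, $r_{\cdot,\cdot}(w^{-1})$, and swaps the two clauses defining \emph{attends}), I may assume the first clause holds: $|I\cap[i']|>r_{i',j'}$ and $|J\cap[j']|=r{+}1$. Since $J$ is the interval ending at $j$, the second condition forces $j\le j'$, hence $r_{i',j'}\ge r_{i',j}$. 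If also $i'\ge i$, then $(i',j')$ is weakly southeast of $(i,j)$ and $r_{i',j'}\ge r_{i,j}=r$, contradicting $r_{i',j'}<r$; so $i'<i$. Now a count in the spirit of Claim~\ref{claim:rank}: the rows $i'{+}1,\dots,i$ contribute at most $i-i'-1$ ones to $r_{i,j}(w)$ in columns $\le j$ (row $i$ contributes none, as $(i,j)\in D(w)$), so $r_{i',j}\ge r-(i-i'-1)=i'-i+r+1$. If $i'<i-r$ then $|I\cap[i']|=0\le r_{i',j'}$ already contradicts the first clause; otherwise $i-r\le i'<i$ and $|I\cap[i']|=i'-i+r+1\le r_{i',j}\le r_{i',j'}$, again a contradiction. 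Hence $m_{I,J}$ is elusive, which (together with step (i)) completes the proof. The step I expect to be delicate is this final count --- matching $|I\cap[i']|$ against the lower bound on $r_{i',j'}$; the southeast walk and the inequalities $r\le\min(i,j)-1$ are routine, and transpose symmetry spares us the second clause of \emph{attends}.
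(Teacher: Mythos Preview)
Your proof is correct and follows essentially the same approach as the paper: both show $m_{I,J}$ belongs to an essential box in the connected component of $b$, then obtain a contradiction from a putative attending $(i',j')$ by bounding $r_{i',j'}\ge r_{i',j}\ge |I\cap[i']|$ via a row count (with $k=i-i'$, this is exactly the paper's inequality $r_{i',j}\ge r-k+1$). Your version is in fact a bit more self-contained --- you derive the bound directly from $w(i)>j$ without needing the paper's intermediate observation that $(i',j)\in D(w)$, you explicitly dispose of the case $i'<i-r$, and you justify $r\le\min(i,j)-1$ --- but the architecture is the same.
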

\begin{proof}
    Let $r = r_b(w), I = [i-r,i]$ and $J = [j-r,j]$. We will show that $m_{I,J}$ is an elusive minor. Since $r_e(w) = r$ for any $e\in E(w)$ that is in the same connected component as $b$ in $D(w)$, $m_{I,J}$ belongs to any such $e$. Suppose $m_{I,J}$ attends $M^{[e']}$ for some $e' = (i',j')\in E(w)$. We can assume, without loss, that $i'< i$ 
    and $j'>j$.
 
    Since $b,e'\in D(w)$, we know that $(i',j)\in D(w)$. Let $k = i-i'$, we then have
    \[r_{i',j}(w)\geq r-k+1.\]
    Since $r_{e'}(w)\geq r_{i',j}(w)$,
    \[r_{e'}(w)\geq r-k+1.\]
    Thus any minor that belongs to $e'$ has size at least $r-k+2$. Since 
    \[|I\cap[i']| = r-k+1 <r-k+2,\]
    $m_{I,J}$ does not attend $M^{[e']}$, a contradiction. Therefore $m_{I,J}$ is elusive, as claimed.
\end{proof}

\subsection{Proof of Corollary~\ref{cor:Grobner}}
For $f\in R$, let ${\sf init}_{\prec}(f)$ be the initial term of $f$ under $\prec$. By definition, a generating set ${\mathcal S}$ of $I_w$ is a Gr\"obner basis if 
\[\langle {\sf init}_{\prec}(f): f\in {\mathcal S}\rangle=
\langle {\sf init}_{\prec}(f): f\in I_w\rangle.\]
By \cite[Theorem~B]{KM:annals}, the essential minors are a Gr\"{o}bner basis under $\prec$.
Therefore it suffices to show that if $m$ is a non-elusive minor then
${\sf init}_{\prec} \, m$ is divisible by ${\sf init}_{\prec} \, m'$ where $m'$ is an elusive minor. We proceed by induction, ordering the essential set by rank value. In the base case where the rank is $0$, all the associated
$1\times 1$ minors are elusive, trivially. Suppose $m$ belongs to $(i,j)\in E(w)$ but is not elusive. 
Since $m=m_{I,J}$ is not elusive we may suppose, without loss of generality, that there is 
$(i',j')\in E(w)$ with $r_{i',j'}(w)<r_{i,j}(w)$
such that $|I\cap[i']|> r_{i',j'}$ and $|J\cap[j']|= r_{i,j}+1$ (the argument in the other case is similar).
Thus, there is a minor $m_{I',J'}$ that belongs to $(i',j')$ (and of size $(r_{i',j'}+1)\times (r_{i',j'}+1)$) whose
antidiagonal term divides that of $m_{I,J}$. More precisely $I'$ consists of the $r_{i',j'}+1$ smallest indices of $I$, and $J'$ consists of the $r_{i',j'}+1$ largest indices of $J$. If $m_{I',J'}$ is elusive we are done. Otherwise by induction
${\sf init}_{\prec} \, m_{I',J'}$ is divisible by ${\sf init}_{\prec} \, m_{I'',J''}$ for some elusive $m_{I'',J''}$ in which case
${\sf init}_{\prec} m_{I'',J''}$ divides ${\sf init}_{\prec} m_{I,J}$, as desired. 

If $w$ is vexillary and
$e,e'\in E(w)$ then $e$ cannot be strictly northwest of $e'$. Using this and the Gr\"obner basis result \cite[Theorem~3.8]{KMY}, the second sentence follows like the first.

The final sentence of the statement then is immediate from Theorem~\ref{thm:main}.

\subsection{Proof of Corollary~\ref{cor:lci}}  

\begin{lemma}
\label{lemma:firstone}
$X_w$ is a complete intersection if and only if $X_{w^{-1}}$ is a complete intersection.
\end{lemma}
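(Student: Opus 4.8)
The plan is to reduce the statement to a symmetry between $X_w$ and $X_{w^{-1}}$ that is already implicit in the setup. The key observation is that transposing matrices, $M \mapsto M^{\mathsf{T}}$, is an isomorphism of ${\sf Mat}_{n\times n}$ that carries the $B_-\times B$ action for $w$ to the $B_-\times B$ action for $w^{-1}$, up to swapping the two Borel factors; concretely $(b_- M b^{-1})^{\mathsf{T}} = (b^{-1})^{\mathsf{T}} M^{\mathsf{T}} (b_-^{\mathsf{T}})^{-1}$, and since transpose interchanges upper and lower triangular matrices, transposition sends the orbit of $M_w$ to the orbit of $M_w^{\mathsf{T}} = M_{w^{-1}}$. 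Taking closures, transposition restricts to an isomorphism $X_w \xrightarrow{\sim} X_{w^{-1}}$ of varieties (indeed of schemes, as it is induced by the ring automorphism $x_{ij}\mapsto x_{ji}$ of $R$ sending $I_w$ to $I_{w^{-1}}$).

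Given such an isomorphism, both the codimension and the minimal number of generators of the defining ideal are preserved. Concretely, $\operatorname{codim} X_w = \operatorname{codim} X_{w^{-1}}$, which is also visible combinatorially since $\ell(w) = \ell(w^{-1})$ (transposing $D(w)$ gives $D(w^{-1})$). And the minimal number of generators of $I_w$ equals that of $I_{w^{-1}}$, since the ring automorphism $x_{ij}\mapsto x_{ji}$ carries a minimal generating set to a minimal generating set. Therefore the complete intersection condition—that the minimal number of generators equals the codimension—holds for $X_w$ if and only if it holds for $X_{w^{-1}}$.

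So the steps, in order, are: (1) observe that transposition is induced by the ring automorphism $\phi\colon R\to R$, $x_{ij}\mapsto x_{ji}$; (2) check that $\phi(I_w) = I_{w^{-1}}$, either directly from Fulton's generators in \eqref{eqn:generators}—using that the rank function satisfies $r_{i,j}(w) = r_{j,i}(w^{-1})$ and that $\phi$ sends the minors of $M^{[i,j]}$ to the minors of $M^{[j,i]}$—or more geometrically from the orbit description above; (3) conclude that $\phi$ descends to an isomorphism $X_w\cong X_{w^{-1}}$; (4) invoke the invariance of codimension and of minimal number of generators under isomorphism (equivalently under $\phi$) to transport the complete intersection property.

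I expect no serious obstacle here; the only point requiring a line of care is step (2), verifying that $\phi$ matches up Fulton's generating sets, which amounts to the elementary identity $r_{i,j}(w) = r_{j,i}(w^{-1})$ for rank functions together with the fact that $\phi$ takes the $(r+1)\times(r+1)$ minors of $M^{[i,j]}$ bijectively to those of $M^{[j,i]}$. Once that is in hand, the lemma is immediate.
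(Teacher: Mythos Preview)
Your proposal is correct and follows essentially the same route as the paper: the paper's one-line proof observes that $D(w)$ is the transpose of $D(w^{-1})$ and then invokes the generators~\eqref{eqn:generators}, which is exactly your step~(2) (the ring automorphism $x_{ij}\mapsto x_{ji}$ carries Fulton's generators for $I_w$ to those for $I_{w^{-1}}$ via $r_{i,j}(w)=r_{j,i}(w^{-1})$). You simply spell out the details and add the equivalent orbit-closure description, but the underlying idea is identical.
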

\begin{proof}
Notice that $D(w)$ is the transpose of $D(w^{-1})$. The Lemma then follows from (\ref{eqn:generators}).
\end{proof}
\begin{lemma}[Shifting]\label{shifting}
If $m_{I,J}$ is an elusive minor of $I_w$ then $m_{I',J'}$ is elusive whenever $I'=(I-\{i\})\cup \{t\}$ where $t>i$ and
$t\not\in I$ or similarly for $J'$.
\end{lemma}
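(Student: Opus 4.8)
The plan is to show that ``shifting'' a row index of an elusive minor $m_{I,J}$ upward (i.e., replacing some $i\in I$ by a larger $t\notin I$) preserves elusiveness; the column case follows by the transpose symmetry $D(w^{-1})=D(w)^{\mathsf T}$ (cf. Lemma~\ref{lemma:firstone}), or by the identical argument with rows and columns interchanged. By iterating single shifts it suffices to treat the case $|I'\setminus I|=1$.

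First I would record what must be checked. Write $r=r_{i_0,j_0}(w)$ where $(i_0,j_0)$ is the southeast corner of $m_{I,J}$, so $|I|=|J|=r+1$. If $t\le \max J$ we are not changing the southeast corner's column, and since $t>i$ could push past $i_0$, I would first handle the subtlety of \emph{which} essential box the shifted minor belongs to: $m_{I',J'}$ has southeast corner $(\max I',\, \max J')$, and I must verify this still lies weakly southeast of, and in the same connected component of $D(w)$ as, some $e\in E(w)$ with the matching rank value. (When $t<i_0$ nothing moves; when $t\ge i_0$, use that $(\max I',j_0)$ still lies in $D(w)$ — because $w$ has no $1$ in rows between the old and new corner within column $j_0$, since $r$ did not change along that stretch — to see the rank condition persists.) Then the core claim is: for every $(i',j')\in E(w)$ with $r_{i',j'}(w)<r$, the shifted minor $m_{I',J'}$ does \emph{not} attend $M^{[i',j']}$, given that $m_{I,J}$ does not.

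The key observation driving the core claim is monotonicity of the relevant intersection counts under upward shifting. For any $i'$, replacing $i$ by $t>i$ can only \emph{decrease or preserve} $|I\cap[i']|$: indeed $|I'\cap[i']| = |I\cap[i']| - [\,i\le i'\,] + [\,t\le i'\,]$, and since $t>i$ the indicator $[\,t\le i'\,]\le[\,i\le i'\,]$. So $|I'\cap[i']|\le |I\cap[i']|$ for all $i'$, while $J$ (hence the column counts $|J\cap[j']|$) is untouched. Recall $m_{I',J'}$ attends $M^{[i',j']}$ iff ($|I'\cap[i']|>r_{i',j'}$ and $|J'\cap[j']|=r+1$) or ($|I'\cap[i']|=r+1$ and $|J'\cap[j']|>r_{i',j'}$). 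In the first disjunct, $|I'\cap[i']|>r_{i',j'}$ forces $|I\cap[i']|\ge|I'\cap[i']|>r_{i',j'}$, and $|J'\cap[j']|=|J\cap[j']|=r+1$, so $m_{I,J}$ would attend $M^{[i',j']}$ — contradicting elusiveness of $m_{I,J}$. In the second disjunct, $|I'\cap[i']|=r+1$ together with $|I'\cap[i']|\le|I\cap[i']|\le r+1$ forces $|I\cap[i']|=r+1$ as well, and $|J'\cap[j']|=|J\cap[j']|>r_{i',j'}$, so again $m_{I,J}$ attends $M^{[i',j']}$, a contradiction. Hence $m_{I',J'}$ attends no such $M^{[i',j']}$, i.e.\ $m_{I',J'}$ is elusive.

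The main obstacle I anticipate is not the attendance inequalities, which are the soft monotonicity argument above, but the bookkeeping of the first step: confirming that the shifted index set still indexes a genuine essential minor with the \emph{same} rank $r$ — in particular that $\max I'\le n$ causes no column/rank mismatch and that $(\max I',\max J')$ still belongs (in the technical sense of ``belongs to $(i,j)\in E(w)$'') to an essential box. This requires a short argument that upward shifting within the row-span allowed by the hypothesis cannot leave the connected component of $D(w)$ or change the rank value; I would dispatch it using Claim~\ref{claim:rank} together with the fact that rank values are constant on connected components of $D(w)$ and that they increase by exactly one each time a $1$ of $M_w$ is crossed.
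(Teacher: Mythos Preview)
Your monotonicity argument --- that $|I'\cap[i']|\le|I\cap[i']|$ for every $i'$ (with $J'=J$), so that if $m_{I',J'}$ attended some $M^{[i',j']}$ then $m_{I,J}$ would as well --- is precisely the paper's proof, which is the single sentence ``Since $|I'\cap [i']| \leq |I\cap [i']|, |J'\cap [j']|\leq |J\cap [j']|$ for all $i',j'\in [n]$, the lemma is immediate from the definitions.''

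The surrounding discussion about which essential box $m_{I',J'}$ \emph{belongs to} is not in the paper and is where your write-up goes astray. You set $(i_0,j_0)=(\max I,\max J)$ and assert $|I|=r_{i_0,j_0}(w)+1$, but the size of an essential minor is $r_{i_e,j_e}(w)+1$ for the essential box $(i_e,j_e)\in E(w)$ it belongs to, not the rank at its own southeast corner; you give no argument that these agree. More seriously, your treatment of the case $t\ge i_0$ (``$r$ did not change along that stretch'') is unsupported and in general false: nothing in the hypotheses prevents a $1$ of $M_w$ in some row between $i_0$ and $t$ and some column $\le j_0$. Indeed, if $t$ exceeds the row of every essential box of rank $r$ whose column bound contains $J$, then $m_{I',J'}$ belongs to no essential box and cannot be elusive. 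The paper simply reads the lemma with the implicit constraint that the shift stays inside the original essential box (i.e.\ $t\le i_e$); every application in the paper satisfies this, and under that reading ``belongs to'' is trivially preserved, leaving only the one-line monotonicity check you already have.
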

\begin{proof}
Since $|I'\cap [i']| \leq |I\cap [i']|, |J'\cap [j']|\leq |J\cap [j']|$ for all $i',j'\in [n]$, the lemma is immediate from the definitions.
\end{proof}
$(\Rightarrow)$: We prove the contrapositive. Suppose $w$ pattern embeds $1342$ or $1432$. Let $a_1<a_2<a_3<a_4$ be such that $w(a_1)<w(a_4)<w(a_2),w(a_3)$. Set $b_1 = w(a_1), b_2 = w(a_4), b_3 = w(a_2), b_4 = w(a_3)$, then $e = (a_3,b_2)\in D(w)$ as shown below in Figure~\ref{fig:third}.
\begin{figure}[h]
    \begin{center}
    \begin{subfigure}{0.45\textwidth}
    \hspace{1cm}
    \begin{tikzpicture}[scale = 0.40]
       \draw[black, thick] (0,0) -- (10,0) -- (10,10) -- (0,10) -- (0,0);
        \draw[black, thick] (3,6) -- (3,7) -- (4,7) -- (4,6) -- (3,6) ;
        \draw[black, thick] (3,4) -- (4,4) -- (4,5) -- (3,5) -- (3,4) ;
        \draw[black, thick] (5.5,6.5) -- (5.5,0);
        \draw[black, thick] (5.5,6.5) -- (10,6.5);
        \draw[black, thick] (1.5,8.5) -- (1.5,0);
        \draw[black, thick] (1.5,8.5) -- (10,8.5);
        \draw[black, thick] (3.5,1.5) -- (3.5,0);
        \draw[black, thick] (3.5,1.5) -- (10,1.5);
        \draw[black, thick] (7.5,4.5) -- (7.5,0);
        \draw[black, thick] (7.5,4.5) -- (10,4.5);
        \draw (3.5,4.5) node{$e$};
        \draw (1.5,8.5) node{$\bullet$};
        \draw (5.5,6.5) node{$\bullet$};
        \draw (7.5,4.5) node{$\bullet$};
        \draw (3.5,1.5) node{$\bullet$};
        \draw (-0.55, 1.5) node{$a_4$};
        \draw (-0.55, 6.5) node{$a_2$};
        \draw (-0.55, 8.5) node{$a_1$};
        \draw (-0.55, 4.5) node{$a_3$};
        \draw (7.5, 10.55) node{$b_4$};
        \draw (3.5, 10.55) node{$b_2$};
        \draw (1.5, 10.55) node{$b_1$};
        \draw (5.5, 10.55) node{$b_3$};
        
    \end{tikzpicture}
    \caption{\normalsize{$w$ embeds $1342$}}
    \label{fig:1342}
    \end{subfigure}
    \begin{subfigure}{0.45\textwidth}
    \hspace{1cm}
        \begin{tikzpicture}[scale=0.40]
        \draw[black, thick] (0,0) -- (10,0) -- (10,10) -- (0,10) -- (0,0);
        \draw[black, thick] (3,6) -- (3,7) -- (4,7) -- (4,6) -- (3,6) ;
        \draw[black, thick] (3,4) -- (4,4) -- (4,5) -- (3,5) -- (3,4) ;
        \draw[black, thick] (7.5,6.5) -- (7.5,0);
        \draw[black, thick] (7.5,6.5) -- (10,6.5);
        \draw[black, thick] (1.5,8.5) -- (1.5,0);
        \draw[black, thick] (1.5,8.5) -- (10,8.5);
        \draw[black, thick] (3.5,1.5) -- (3.5,0);
        \draw[black, thick] (3.5,1.5) -- (10,1.5);
        \draw[black, thick] (5.5,4.5) -- (5.5,0);
        \draw[black, thick] (5.5,4.5) -- (10,4.5);
        \draw (3.5,4.5) node{$e$};
        \draw (1.5,8.5) node{$\bullet$};
        \draw (7.5,6.5) node{$\bullet$};
        \draw (5.5,4.5) node{$\bullet$};
        \draw (3.5,1.5) node{$\bullet$};
        \draw (-0.55, 1.5) node{$a_4$};
        \draw (-0.55, 6.5) node{$a_2$};
        \draw (-0.55, 8.5) node{$a_1$};
        \draw (-0.55, 4.5) node{$a_3$};
        \draw (7.5, 10.55) node{$b_4$};
        \draw (3.5, 10.55) node{$b_2$};
        \draw (1.5, 10.55) node{$b_1$};
        \draw (5.5, 10.55) node{$b_3$};
        \end{tikzpicture}
        \caption{\normalsize{$w$ embeds $1432$}}
        \label{fig:1432}
    \end{subfigure}
    \caption{\label{fig:third}}
    \label{}    
    \end{center}
\end{figure}

Furthermore, we can assume, without loss, that for any $a<a_1$, we have $w(a)>b_2$. That is, we can pick 
$(a_1,b_1)$ to be the highest non-zero entry in $M_w$ that is strictly northwest of $e$. Since $w(a_2) = b_3>b_2$, we have $a_3>r_e(w)+1$. Set $r = r_e(w)$ and let 
\[I = [a_3-r,a_3], I' = \{a_3-r-1\}\cup[a_3-r+1,a_3], J = [b_2-r,b_2].\]
Since $m_{I,J}$ is elusive (by Proposition~\ref{prop:elusiveSE}), to show that $m_{I',J}$ is also elusive, it is enough to prove that there does not exist $(i',j')\in E(w)$ such that $J\cap[j'] = J$, $r_{i',j'} = 0$ and $I\cap[i'] = \{a_3-r-1\}$. 
Suppose not. Indeed, since $w(a_2)>b_2$, we know that $a_3-r-1\geq a_1$ and thus $r_{i',j'}\geq 1$,
a contradiction. Therefore, there are at least two elusive minors whose southeast corner is $e$ and therefore, by Theorem~\ref{thm:main}, 
$X_w$ is not a complete intersection. 

Since $w$ includes $1342$ if and only if $w^{-1}$ includes $1423$, we are done by Lemma~\ref{lemma:firstone}.

$(\Leftarrow)$: We again argue the contrapositive. Suppose $X_w$ is not a complete intersection. By Theorem~\ref{thm:main} there is either 
$e = (i,j)\in D(w)$ having more than one elusive minor $m$ with southeast corner $e$, or there is an $e=(i,j)\in [n]^2-D(w)$
that is the southeast corner of an elusive minor $m'$ that belongs to $e'\in E(w)$. In the second case, by
using Lemma~\ref{shifting} to repeatedly shift the southmost row and/or eastmost column used by
$m'$ one obtains another elusive minor $m''$ with southeast corner $e'$ which
is different than the elusive minor from the proof of Proposition~\ref{prop:elusiveSE}. Hence we assume we are in the first
case.

Let $r = r_e(w)$. By using Lemma~\ref{lemma:firstone} or Lemma~\ref{shifting}, we may assume that $m_{I',J}$ is an elusive minor where
\[I' = \{i-r-1\}\cup[i-r+1,i], J = [j-r,j].\]
Since $m_{I',J}$ is elusive, $r_{i-r-1,j}\geq 1$. Also, since $(i,j)\in D(w)$, wither $w(i-r-1)<j$ or $(i-r-1,j)\in D(w)$ is true.

Suppose $w(i-r-1)<j$, by the pigeonhole principle, there exists $a$ such that $(a,j)\in D(w)$ and $i-r-1<a<i$. As a result, 
\[i-r-1<a<i<w^{-1}(j)\ \text{and}\  w(i-r-1)<j<w(a),w(i).\]
Therefore $w$ embeds $1342$ or $1432$, and we are done.

Hence $(i-r-1,j)\in D(w)$. Since $r_{i-r-1,j}\geq 1$, there exists $a<i-r-1$ such that $w(a)<j$. Since $(i-r-1,j)\in D(w)$, we get $w(i-r-1)>j$. We then have
\[a<i-r-1<i<w^{-1}(j)\ \text{and }w(a)<j<w(i-r-1),w(i).\]
Therefore $w$ embeds $1342$ or $1432$. \qed

\section*{Acknowledgements}
We thank the anonymous referee for their helpful and detailed comments. 
The authors were supported by an NSF RTG in Combinatorics. Additionally, SG was supported by an NSF graduate fellowship and AY by a Simons Collaboration Grant.

\end{document}